\documentclass[12pt]{amsart}
\oddsidemargin=0in \evensidemargin=0in 
\textwidth=6.5in \textheight=8.5in
 
\usepackage{amsmath,amsfonts,amsthm,amssymb}
\usepackage[all]{xypic}
\newcommand{\scrF}{\mathcal{F}}
\newcommand{\scrN}{\mathcal{N}}
\newcommand{\scrU}{\mathcal{U}}

\newcommand{\fb}{\mathfrak{b}}

\newcommand{\bs}{\mathbf{s}}
\newcommand{\bu}{\mathbf{u}}

\newcommand{\bbF}{\mathbb{F}}
\newcommand{\bbQ}{\mathbb{Q}}
\newcommand{\bbR}{\mathbb{R}}
\newcommand{\bbZ}{\mathbb{Z}}

\newcommand{\hzp}{H^*_{\bbZ/p}(\mathrm{pt})}

\newtheorem{dummy}{Dummy}[section]

\newtheorem{proposition}[dummy]{Proposition}
\newtheorem{lemma}[dummy]{Lemma}
\newtheorem{corollary}[dummy]{Corollary}
\newtheorem{theorem}[dummy]{Theorem}
\theoremstyle{definition}

\newtheorem{remark}[dummy]{Remark}

\title{A topological approach to induction theorems in Springer theory}
\author{David Treumann}
\date{September 16, 2008}

\SelectTips{cm}{}
\newdir^{ (}{{}*!/-5pt/@^{(}}
\newcommand{\har}{\ar@{^{ (}->}}

\newcommand{\git}{\backslash \! \backslash}
\newcommand{\bul}{$\bullet$}
\newcommand{\tscrN}{\tilde{\scrN}}
\newcommand{\reg}{\mathrm{rss}}

\begin{document}
\maketitle

\section{Introduction}

The Springer correspondence is a connection between the geometry of the space of unipotent elements $\scrN$ in an algebraic group $G$ and the representation theory of the Weyl group $W$ associated to $G$.  Representations of $W$ are constructed on the cohomology of the fibers of Springer's resolution of singularities $\tscrN \to \scrN$.  It has been clear for a long time that representations can be constructed this way over any ground ring, but the focus of the theory has overwhelmingly been on representations in characteristic zero.  Recently Juteau \cite{juteau} has begun a systematic study of the modular version of this theory, extending the intersection homology techniques used by Lusztig, Borho, and MacPherson \cite{lusztig2, borhomacp} and studying the extent to which the Brauer-Cartan (or ``cde'') triangle can be seen in the geometry of $\scrN$.

This paper is another contribution to the modular gap in Springer theory.  Our perspective is topological: where Juteau approaches modular representations via the techniques of Lusztig-Borho-MacPherson, we follow the approach of Grothendieck-Brieskorn-Slodowy.  We use this construction to produce ``induction theorems'' which relate the Springer correspondences for $G$ and for a subgroup of $G$.

\subsection{Modular representations via Rossmann's homotopy action}
A major part of this paper is a review of and advocacy for an approach to Springer theory due to Rossman \cite{rossmann}.  Write $F_\nu \subset \tscrN$ for the fiber of Springer's resolution over $\nu \in \scrN$.  The existence of the $W$-action on $H^*(F_\nu)$, constructed first by Springer in 1976, came as a surprise especially because in simple examples it is easy to see that it cannot possibly be induced by an action of $W$ on the space $F_\nu$.  For a long time it was unknown whether the action on $H^*(F_\nu)$ could be lifted to an action on the homotopy type of $F_\nu$ (though early progress was made in \cite{KL}).  Rossmann gave an affirmative solution to this problem in 1993.

In the first five sections of this paper we give a self-contained account of Rossmann's construction.  We produce an action of the braid group $B_W$ associated to $W$ on a $\dim(G)$-dimensional manifold-with-partial-boundary $\scrU \times C_\nu$ which is homotopy equivalent to $F_\nu$, and show that the kernel of the projection $B_W \to W$ acts by transformations homotopic to the identity.  This is a little weaker than producing a space homotopy equivalent to $F_\nu$ on which $W$ acts---this is still an open problem as far as I know.  Nevertheless the construction does give an action of $W$ on every functorial homotopy invariant of $F_\nu$, in particular on mod $p$ cohomology.

The spaces $\scrU$ and $C_\nu$ are easy to describe: $\scrU$ is the contractible universal cover of the set of regular elements in a small ball around $1 \in W\git T$, and $C_\nu$ is a regular neighborhood with boundary of $F_\nu$ in $\tscrN$.  Rossmann's construction follows a standard construction (due originally to Grothendieck, Brieskorn, and Slodowy \cite{brieskorn, slodowy}) of the Springer representations via nearby cycles.  In that approach, the essential point is that a Springer fiber has the same cohomology as the appropriate Milnor fibers of the projection $f:G \to W \git T$.  (This is expressed sheaf-theoretically as an isomorphism between the nearby cycles sheaf of $f$ and the direct image of the constant sheaf under the Springer map $\tscrN \to \scrN$.  See \cite{grinberg} for a nice explanation.)  The essential point of Rossmann's refined construction is that these Milnor fibers are actually homeomorphic to $C_\nu$.

\subsection{Induction theorems}
Let $H \subset G$ be a connected reductive subgroup of $G$.  Suppose that $\nu$ lies in $H$.  Then we may study the Springer representations for the pair $\nu \in H$ and for the pair $\nu \in G$.  A basic question is how the representations of $W_H$ on $H^*(F_{\nu,H})$ and of $W_G$ on $H^*(F_{\nu,G})$ are related.  In case $H$ is a Levi subgroup, Alvis and Lusztig \cite{alvislusztig,lusztig} showed that the characters of the $W_G$-modules $\mathrm{Ind}_{W_H}^{W_G} H^*(F_{\nu,H})$ and $H^*(F_{\nu,G})$ are the same.

\begin{theorem}[Alvis-Lusztig, \cite{alvislusztig,lusztig}]
\label{thm:alvislusztig}
Let $G$ be a reductive group, let $L$ be a Levi subgroup, and let $\nu$ be a unipotent element of $L$.  Let $W_L \subset W_G$ denote the Weyl groups of $L$ and $G$, and let $F_{\nu,L}$ and $F_{\nu,G}$ denote the Springer fibers associated to  $\nu$ in $L$ and $G$.  There is a filtration by $W_G$-submodules of
$$\text{\rm Ind}_{W_L}^{W_G} (\bigoplus_{i \in \bbZ} H^i(F_{\nu,L};\bbQ))$$
so that the associated graded satisfies 
$$\mathrm{gr}^k(\text{\rm Ind}_{W_L}^{W_G} (\bigoplus_{i \in \bbZ} H^i(F_{\nu,L};\bbQ))) \cong H^k(F_{\nu,G};\bbQ)$$
\end{theorem}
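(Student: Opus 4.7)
The plan is to prove this theorem by producing a spectral sequence whose $E_2$-page realizes the induced representation and which abuts to the Springer representation for $G$. The natural source in the Grothendieck-Brieskorn-Slodowy-Rossmann framework is the Leray sequence of a parabolic Springer fibration. Fix a parabolic subgroup $P \subset G$ with Levi factor $L$, consider the partial Springer resolution $G \times^P (\mathrm{Lie}(P) \cap \scrN) \to \scrN$, and let $F^P_{\nu,G}$ denote its fiber over $\nu$. The full Springer map factors through this partial resolution, producing a continuous map $\pi\colon F_{\nu,G} \to F^P_{\nu,G}$ whose fibers over appropriate strata are Springer fibers of the Levi $L$.

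The Leray spectral sequence of $\pi$ takes the form
\begin{equation*}
E_2^{p,q} = H^p\bigl(F^P_{\nu,G};\; \underline{H}^q(F_{\bullet,L};\bbQ)\bigr) \Longrightarrow H^{p+q}(F_{\nu,G};\bbQ),
\end{equation*}
where $\underline{H}^q(F_{\bullet,L};\bbQ)$ is a local system assembling the Levi Springer fibers over the base. Since Rossmann's construction equips $F_{\nu,G}$ with a homotopy action of $B_{W_G}$ compatible with $\pi$, this spectral sequence inherits a $B_{W_G}$-action that descends to $W_G$ at $E_2$. Provided the identification of $\bigoplus E_2^{p,q}$ with $\mathrm{Ind}_{W_L}^{W_G} H^*(F_{\nu,L};\bbQ)$ below goes through, the Leray filtration on the abutment transports to a $W_G$-equivariant filtration of the induced representation whose successive quotients are the $H^k(F_{\nu,G};\bbQ)$, as required.

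The principal obstacle is the $W_G$-equivariant identification
\begin{equation*}
\bigoplus_{p,q} E_2^{p,q} \cong \mathrm{Ind}_{W_L}^{W_G} H^*(F_{\nu,L};\bbQ).
\end{equation*}
The motivating case is $\nu$ principal nilpotent in $L$: then $F^P_{\nu,G}$ is a discrete $[W_G:W_L]$-point set with transitive $W_G$-action and stabilizer $W_L$, $\pi$ is trivial on each component, and the identification reduces to Shapiro's lemma. For a general $\nu$ the local system is nontrivial and $F^P_{\nu,G}$ has positive dimension, so one must exploit Rossmann's explicit model $F_{\nu,G} \simeq \scrU \times C_\nu$ together with the analogous model for $L$ and the compatibility between the inclusion $B_{W_L} \hookrightarrow B_{W_G}$ and the parabolic structure. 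The aim is to realize the parabolic fibration locally as a coproduct indexed by $W_G/W_L$-coset representatives of copies of Rossmann's $L$-model, from which a Shapiro-type identification of $E_2$ with the induced representation follows. Making this canonical and $W_G$-equivariant, and confirming degeneration at $E_2$ (needed for a filtration rather than a mere subquotient relation), is where the main work lies.
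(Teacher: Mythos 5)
Your proposal takes a fundamentally different route from the paper, and the route as written does not work. The paper proves Theorem \ref{thm:alvislusztig} by equivariant localization: it realizes $L$ as the centralizer of a circle $U(1) \subset K_G(\nu)$, observes that $E_{\nu,L}^{\reg}$ is the $U(1)$-fixed locus of $E_{\nu,G}^{\reg}$, applies Quillen's localization theorem to the locally constant sheaves $\scrF_L, \scrF_G$ of $U(1)$-equivariant cohomologies over the regular punctured balls in $W_L \git T$ and $W_G \git T$ (where the pushforward $\iota_*\scrF_L$ encodes $\mathrm{Ind}_{W_L}^{W_G}$), and then extracts the filtration from the Borel spectral sequence, which degenerates because $H^*(F_{\nu,G};\bbQ)$ vanishes in odd degrees. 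No parabolic Springer fibration appears.

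Your Leray spectral sequence approach has a concrete flaw at its foundation: the map $\pi \colon F_{\nu,G} \to F^P_{\nu,G}$ is not a locally trivial fibration. The fiber over a parabolic $Q \in F^P_{\nu,G}$ is the Springer fiber, in the Levi quotient $L_Q$, of the \emph{image} $\bar\nu_Q$ of $\nu$ in $L_Q$; this nilpotent varies with $Q$ and can change Jordan type, so the fibers of $\pi$ are not even all homotopy equivalent, let alone of constant dimension. Consequently $R^q\pi_*\bbQ$ is not a local system and the $E_2$-page you write down is not correct. Your ``motivating case'' is also wrong: for $\nu$ principal nilpotent in $L$ (so $F_{\nu,L}$ is a point), $F^P_{\nu,G}$ is the Spaltenstein variety, which has dimension $\dim F_{\nu,G}$ and is typically positive-dimensional rather than a discrete $[W_G:W_L]$-point set; already for $G = SL_3$, $L$ of type $A_1$, and $\nu$ subregular, $F^P_{\nu,G}$ is one-dimensional. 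Beyond this, you openly defer both the identification of $E_2$ with $\mathrm{Ind}_{W_L}^{W_G} H^*(F_{\nu,L};\bbQ)$ and the degeneration at $E_2$, but these two steps are essentially all of the theorem. To repair this direction you would need a genuinely new mechanism to replace the broken Leray sequence; the paper's use of $U(1)$-localization is precisely such a mechanism, trading the parabolic geometry for the much better-behaved fixed-point comparison $E_{\nu,G}^{\reg} \supset E_{\nu,L}^{\reg} = (E_{\nu,G}^{\reg})^{U(1)}$.
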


In this paper we use Rossmann's construction to give a topological explanation for this.  Our proof is reminiscent of topological work on the ``fundamental lemma'' in the Langlands program \cite{fundlem1,fundlem2}: we use localization in circle-equivariant cohomology to make comparisons.  This technique, using $\bbZ/p$-equivariant localization instead, applies in the modular setting as well.  We will show

\begin{theorem}
\label{thm:ALmodular}
Let $G$ be a reductive group, let $\nu \in G$ be a unipotent element, and let $g \in G$ be an element of order $p$ ($p$ a prime) which is contained in the connected component of the centralizer of $\nu$.  Let $Z \subset G$ be the centralizer of $g$ in $G$; note that $\nu$ is contained in $Z$.  Suppose furthermore that $Z$ is connected (this is true, for instance, if $G$ is simply connected.)  Let $W_Z$ and $W_G$ denote the Weyl groups of $Z$ and $G$, and let $F_{\nu,Z}$ and $F_{\nu,G}$ be the Springer fibers associated to $\nu$ in $Z$ and $G$.  There is a filtration by $W_G$-modules of
$$\text{\rm Ind}_{W_Z}^{W_G} (\bigoplus_{i \in \bbZ} H^i(F_{\nu,Z};\bbF_p))$$
whose associated graded satisfies
$$\mathrm{gr}^k(\text{\rm Ind}_{W_Z}^{W_G} (\bigoplus_{i \in \bbZ} H^i(F_{\nu,Z};\bbF_p))) \cong  H^k(F_{\nu,G};\bbF_p)$$
\end{theorem}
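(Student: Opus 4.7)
The plan is to adapt the topological proof of Theorem~\ref{thm:alvislusztig} sketched after its statement, substituting $\bbZ/p$-equivariant localization for circle-equivariant localization. The geometric backbone is the identification of fixed loci: a maximal torus $T$ containing the order-$p$ element $g$ lies in $Z$, and conjugation by $g$ acts on $\tscrN$ with fixed locus $\tscrN^g = \tscrN_Z$---using connectedness of $Z$ to match Borels of $G$ containing $g$ with Borels of $Z$---so that $F_{\nu,G}^g = F_{\nu,Z}$.

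First I would apply Rossmann's construction to both $G$ and $Z$. The fiber $F_{\nu,G}$ is modeled by $\scrU_G \times C_{\nu,G}$ with its $B_{W_G}$-action descending up to homotopy to a $W_G$-action on cohomology. Because $g \in T$ acts trivially on $W_G \git T$ and hence on $\scrU_G$, the $\bbZ/p = \langle g \rangle$-action on the Rossmann model is concentrated on the $C_{\nu,G}$ factor; its fixed locus deformation retracts onto Rossmann's model for $F_{\nu,Z}$, and the parabolic sub-braid-group $B_{W_Z} \subset B_{W_G}$ preserves this fixed locus and restricts to Rossmann's construction for $Z$.

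Second I would pass to $\bbZ/p$-equivariant cohomology with $\bbF_p$-coefficients. Writing $\eta$ for the polynomial generator of $\hzp$, Smith-theoretic localization supplies an isomorphism
\[
\hzp[\eta^{-1}] \otimes_{\hzp} H^*_{\bbZ/p}(F_{\nu,G};\bbF_p) \;\cong\; \hzp[\eta^{-1}] \otimes_{\bbF_p} H^*(F_{\nu,Z};\bbF_p).
\]
The Borel spectral sequence $H^*(B\bbZ/p) \otimes H^*(F_{\nu,G};\bbF_p) \Rightarrow H^*_{\bbZ/p}(F_{\nu,G};\bbF_p)$ is $W_G$-equivariant and gives a $W_G$-stable filtration on the target; its associated graded, arranged by $\eta$-degree, reproduces $H^*(F_{\nu,G};\bbF_p)$ with its cohomological grading. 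Meanwhile the localized side, analyzed through the inclusion $B_{W_Z} \hookrightarrow B_{W_G}$ together with the covering action of $W_G$ on $\scrU_G$, assembles $[W_G:W_Z]$ permuted copies of $H^*(F_{\nu,Z};\bbF_p)$ to yield $\hzp[\eta^{-1}] \otimes \mathrm{Ind}_{W_Z}^{W_G} H^*(F_{\nu,Z};\bbF_p)$. Matching the two produces the advertised filtration.

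I expect the main obstacle to be verifying that the Smith isomorphism intertwines the \emph{full} $W_G$-action, not just the evident $W_Z$-action, so that the localized side really is the induced representation: the nontrivial ingredient must be the action of $W_G/W_Z$-cosets through Rossmann's braid covering transformations on $\scrU_G$. Note also that the $\bbZ/p$-action on $F_{\nu,G}$ is not equivariantly formal in general---formality would force the equality $\dim H^*(F_{\nu,G};\bbF_p) = \dim H^*(F_{\nu,Z};\bbF_p)$---and the failure of formality is precisely what makes the Borel spectral sequence differentials nontrivial and produces the filtration asked for in the theorem.
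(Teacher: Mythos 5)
Your proposal goes wrong in the last paragraph, and the error propagates backwards through the whole argument. You assert that the $\bbZ/p$-action on $F_{\nu,G}$ \emph{cannot} be equivariantly formal, because formality "would force $\dim H^*(F_{\nu,G};\bbF_p) = \dim H^*(F_{\nu,Z};\bbF_p)$," and you then lean on the nondegenerate differentials of the Borel spectral sequence to produce the filtration. Both parts of this are backwards. The fixed locus $F_{\nu,G}^{g}$ (and likewise $\tscrN^{g}$) is not a single copy of $F_{\nu,Z}$; it is a disjoint union of $[W_G:W_Z]$ copies of $F_{\nu,Z}$ --- already for $g$ regular, $(G/B)^{g}$ is $|W_G|$ points, not $|W_Z|=1$ point. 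So formality forces $\dim H^*(F_{\nu,G};\bbF_p) = [W_G:W_Z]\dim H^*(F_{\nu,Z};\bbF_p)$, which is exactly consistent with the dimension count in the theorem (since induction multiplies dimension by $[W_G:W_Z]$). Your own second step already acknowledges the $[W_G:W_Z]$ copies on the localized side, so the proposal is internally inconsistent. More fundamentally, the filtration in the statement does not come from nontrivial differentials: it is the Leray/Borel filtration on the abutment $H^*_{\bbZ/p}(F_{\nu,G};\bbF_p)$, and the theorem's conclusion requires this filtration to have associated graded equal to the $E_2$-page, i.e.\ it \emph{requires} degeneration.

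The real content you are missing is precisely the degeneration of the mod-$p$ Borel spectral sequence, which --- unlike in the rational case, where odd-degree vanishing of $H^*(F_{\nu,G};\bbQ)$ immediately kills all differentials --- is not automatic because $H^*(B\bbZ/p;\bbF_p)$ does \emph{not} vanish in odd degrees when $p$ is odd. The paper handles this by using the hypothesis that $g$ lies in the identity component of $Z_G(\nu)$ (a hypothesis your proposal never uses): this lets one choose a circle $U(1)\subset K_G(\nu)$ with $g\in U(1)$, form the cartesian square relating $(F_{\nu,G})_{h\bbZ/p}$ over $B\bbZ/p$ to $(F_{\nu,G})_{hU(1)}$ over $BU(1)$, observe that $Rc_*\bbF_p$ (for $c:(F_{\nu,G})_{hU(1)}\to BU(1)$) is formal because both $H^*(F_{\nu,G};\bbF_p)$ and $H^*(BU(1);\bbF_p)$ are concentrated in even degrees, and then transport this formality across the square by proper base change. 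That is the genuinely new ingredient in the modular case; without it, the theorem as stated is not obtained.
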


\section{Springer glossary}
\label{sec:glossary}

For our purposes, Springer theory is the geometry of the spaces and maps in this diagram:
\begin{equation}
\label{eq-grinberg}
\SelectTips{cm}{}
\xymatrix{
\tscrN \har[r]^{\tilde{j}} \ar[d]^p & \tilde{G} \ar[r]^{\tilde{f}} \ar[d]^{q} & T \ar[d]^{\pi} \\
\scrN \har[r]^j & G \ar[r]^f &W \backslash \! \backslash T
}
\end{equation}
Let us briefly indicate what these spaces are; for more details we refer to \cite{grinberg}.
\begin{enumerate}
\item[\bul] $G$ is a fixed complex reductive Lie group.
\item[\bul] $T$ is the ``universal maximal torus.''  That is, $T$ is an algebraic torus which is naturally identified with $B/[B,B]$ for every Borel subgroup $B \subset G$.
\item[\bul] $W$ is the Weyl group of $G$, and $W \git T$ is the space of $W$-orbits in $T$.
\item[\bul] $\tilde{G}$ is the Grothendieck alteration of $G$: the space of pairs $(x,B)$ where $B \subset G$ is a Borel subgroup and $x$ is an element of $B$.
\item[\bul] $\scrN$ is the set of unipotent elements of $G$
\item[\bul] $\tscrN$ is the Springer resolution of $\scrN$: the space of pairs $(x,B)$ where $B$ is a Borel subgroup and $x$ is an element of $[B,B]$.
\end{enumerate}
The maps $j$ and $\tilde{j}$ are just the inclusions.  The maps $p$ and $q$ are given by $(x,B) \mapsto x$ and $\pi$ is the quotient map.  The map $\tilde{f}$ carries $(x,B)$ to its image in $B /[B,B] \cong T$, and $f$ is the unique map making the right-hand square commute.  Alternatively, letting $G$ act on itself by conjugation, one may define $f$ to be the GIT quotient map $G \to G \git G$ composed with a canonical isomorphism $W \git T \cong G \git G$.

Each of the spaces $\tilde{G}, G,T$ and $W \git T$ have dense open subsets of \emph{regular semisimple} elements; we write them as $\tilde{G}^{\reg} \subset \tilde{G}$, $G^\reg \subset G$, etc.  Explicitly, we have
\begin{enumerate}
\item[\bul] $G^{\reg}$ is the set of regular semisimple elements in $G$.
\item[\bul] $\tilde{G}^\reg$ is the set of pairs $(x,B) \in \tilde{G}$ where $x$ is a regular semisimple element.
\item[\bul] $T^{\reg}$ is the set of regular semisimple elements in $T$.
\item[\bul] $(W \git T)^{\reg}$ is the set of $W$-orbits of regular semisimple elements in $T$.
\end{enumerate}

Each of the maps $\tilde{f}, f,q,\pi$ restricts to a submersion on the set of regular elements, and the square
$$\xymatrix{
\tilde{G}^\reg \ar[r] \ar[d] & T^\reg \ar[d] \\
G^{\reg} \ar[r] & (W \git T)^\reg
}$$
is cartesian.  In particular, the vertical arrows are covering spaces with deck group $W$.

\begin{remark}
If $X$ is a subset of one of $\tilde{G}, G,T$ or $W \git T$ we will write $X^\reg$ for the intersection of $X$ with $\tilde{G}^\reg, G^\reg,T^\reg$ or $(W \git T)^\reg$ respectively.
\end{remark}

\section{Metric structures and a trivialization of $\tilde{f}$}
\label{sec-trivial}

Fix a maximal compact subgroup $K$ of $G$, and endow $G$ with a Hermitian metric which is left and right invariant under $K$.  This structure provides, for each Borel subgroup, a canonical diffeomorphism $B \cong T \times [B,B]$.  Indeed, $B \cap K$ is a maximal compact torus of $B$, and its Zariski closure in $B$ gives a section of the principal $[B,B]$-bundle $B \to T$.  Alternatively, the diffeomorphism $B \cong T \times [B,B]$ can be obtained by exponentiating the orthogonal decomposition of the Lie algebra
$\fb = [\fb,\fb] \oplus [\fb,\fb]^\perp$.  Let us write $[B,B]^\perp$ for the connected subgroup of $B$ whose Lie algebra is $[\fb,\fb]^\perp$, or equivalently for the Zariski closure of $B \cap K$.

Denote the projection $B \to [B,B]$ by $\bu_B$ and the projection $B \to [B,B]^\perp$ by $\bs_B$ (the u and s stand for ``unipotent'' and ``semisimple'').  The $\bu_B$ assemble to a trivialization of $\tilde{f}$.  That is, the map
$$\tilde{G} \to \tscrN \times T:(x,B) \mapsto ((\bu_B(x),B),\tilde{f}(x,B))$$
is a diffeomorphism and commutes with the projections to $T$.

\begin{remark}
We will actually only make use of the fact that $\tilde{f}:\tilde{G} \to T$ is a submersion -- this is a weaker statement because $\tilde{f}$ is not proper.
\end{remark}

The Hermitian structure on $G$ induces one on the universal torus $T$, via the isomorphism $T \cong [B,B]^\perp$; the metric is independent of $B$.  Let us write $d(x,y)$ for the induced distance function on these spaces.  Let us also define a metric on the space $W \git T$, by setting
$$d(W\cdot x, W\cdot y) = \inf_{w \in W} d(w \cdot x,y)$$
for the distance between the $W$-orbit of $x$ and the $W$-orbit of $y$.

For each $\delta > 0$, write $B(\delta,T)$ for the open $\delta$-ball centered at the identity in $T$, and write $B(\delta,W\git T)$ for the open $\delta$-ball centered at the identity orbit in $W \git T$.
$$\begin{array}{c}
B(\delta,T) := \{x \in T \mid d(x,1) < \delta\} \\
B(\delta,W\git T) := \{W \cdot x \in W \git T \mid d(W \cdot x,1) < \delta\}
\end{array}
$$
Note that we have $d_T(x,1) = d_{W \git T}(W \cdot x,1)$ for all $x \in T$.  Thus the image of $B(\delta,T)$ under $\pi$ is $B(\delta,W\git T)$.

\section{Some spaces near a unipotent element}
\label{sec:spacesnear}

Fix a unipotent element $\nu \in \scrN$, and let $\epsilon$ and $\delta$ be positive real numbers with $\delta \ll \epsilon \ll 1$.  Define the following spaces:

\begin{enumerate}
\item[\bul] Let $F_{\nu} \subset \tscrN$ be the Springer fiber over $\nu$:
$$F_\nu := p^{-1}(\nu)$$
\item[\bul] Let $E_{\nu}(\epsilon,\delta) \subset G$ be the intersection of a closed $\epsilon$-ball around $\nu$ with the inverse image of an open $\delta$-ball around $1 \in W \git T$.  If $B$ is any Borel subgroup containing $x$ then we may write this as
$$E_\nu := \{x \in G \mid d(x,\nu) \leq \epsilon \text{ and } d(\bs_B(x), 1) < \delta\}$$
\item[\bul] Let $D_{\nu}(\epsilon,\delta) \subset \tilde{G}$ be the inverse image of $E_{\nu}$ under $q$:
$$D_{\nu} := \{(x,B) \in \tilde{G} \mid d(x,\nu) \leq \epsilon \text{ and } d(\bs_B(x),1)<\delta\}$$
\item[\bul] Let $C_\nu(\epsilon) \subset \tscrN$ be the intersection of $D_\nu$ with $\tscrN$:
$$C_{\nu} := \{(x,B) \in \tscrN \mid d(x,\nu) \leq \epsilon\}$$
(Since $\tscrN \to \scrN$ is a resolution of singularities, a standard argument shows that $C_\nu$ deformation retracts onto $F_\nu$.  See proposition \ref{CnuFnu}.)
\end{enumerate}

$E_\nu$ and $D_\nu$ fit into the following commutative square:
$$\xymatrix{
D_\nu \ar[r] \ar[d] & B(\delta,T) \ar[d] \\
E_\nu \ar[r] & B(\delta, W \git T)
}$$

This square becomes cartesian when the maps are restricted to the regular parts of each space:
$$\xymatrix{
D_\nu^\reg \ar[r] \ar[d] & B(\delta,T)^\reg \ar[d] \\
E_\nu^\reg \ar[r] & B(\delta,W \git T)^\reg
}$$

\subsection{The action of the compact centralizer}

Write $K(\nu) = K \cap Z_G(\nu)$ for the maximal compact subgroup of the centralizer of $\nu$ which is contained in $K$.  The distance from $\nu$ with respect to the hermitian metric on $G$ is invariant under $K(\nu)$, and therefore $K(\nu)$ acts on the spaces $F_\nu,E_\nu(\epsilon,\delta),D_\nu(\epsilon,\delta)$, and $C_\nu(\epsilon)$.  Moreover, the fibers of $D_\nu \to B(\delta,T)$ and $E_\nu \to B(\delta,W \git T)$ are preserved by this action.  We also have

\begin{proposition}
\label{CnuFnu}
For $\epsilon$ sufficiently small, the inclusion $F_\nu \to C_\nu(\epsilon)$ admits a $K(\nu)$-equivariant deformation retraction. 
\end{proposition}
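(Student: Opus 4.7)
The plan is to realize the inclusion $F_\nu \hookrightarrow C_\nu(\epsilon)$ as a deformation retract using the negative gradient flow of a $K(\nu)$-invariant function whose zero set is exactly $F_\nu$. Define $\rho : \tscrN \to \bbR_{\geq 0}$ by $\rho(x,B) = d(x,\nu)^2$. Since the Hermitian metric on $G$ is $K$-invariant and $K(\nu)$ fixes $\nu$, the function $\rho$ is $K(\nu)$-invariant. By construction $F_\nu = \rho^{-1}(0)$, and $C_\nu(\epsilon) = \rho^{-1}([0,\epsilon^2])$ is a compact $K(\nu)$-invariant neighborhood of $F_\nu$; compactness follows from the properness of the Springer map $p$.

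Equip $\tscrN$ with a $K$-invariant Riemannian metric (one exists by restriction from a $K$-invariant metric on any smooth $K$-manifold containing $\tscrN$, or by averaging a non-equivariant one), and consider the negative gradient flow of $\rho$ with respect to it. Because $\rho$ is real-analytic and $F_\nu$ is its compact zero set, the \L{}ojasiewicz gradient inequality applies in a neighborhood of $F_\nu$: trajectories of the negative gradient flow there have uniformly bounded length and each converges to a unique limit in $F_\nu$. For $\epsilon$ sufficiently small, a reparametrization then produces a strong deformation retraction $r_t : C_\nu(\epsilon) \to C_\nu(\epsilon)$ with $r_0 = \mathrm{id}$, $r_1(C_\nu(\epsilon)) \subset F_\nu$, and $r_t|_{F_\nu} = \mathrm{id}$ throughout. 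Because $\rho$ and the chosen metric are $K(\nu)$-invariant, the gradient vector field commutes with $K(\nu)$; so does its flow, and the retraction is therefore $K(\nu)$-equivariant.

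The main obstacle is that $F_\nu$ is in general singular, so the classical tubular neighborhood theorem for smooth submanifolds does not apply directly; the \L{}ojasiewicz inequality is what lets us replace it. An alternative route, which avoids analytic gradient estimates, is to choose a $K(\nu)$-invariant Whitney stratification of $\tscrN$ in which $F_\nu$ is a union of strata and appeal to an equivariant version of Thom's first isotopy lemma applied to $\rho$, which produces a local trivialization of $\rho$ over $(0,\epsilon^2]$ compatible with the $K(\nu)$-action and hence the desired equivariant retraction.
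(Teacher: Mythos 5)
Your proof takes essentially the same approach as the paper's: both consider the $K(\nu)$-invariant proper function $\rho(x,B) = d(x,\nu)^2$ and integrate its gradient flow to produce the retraction of $C_\nu(\epsilon) = \rho^{-1}([0,\epsilon^2])$ onto $F_\nu = \rho^{-1}(0)$. Your version supplies more detail than the paper---invoking the \L{}ojasiewicz gradient inequality to justify convergence of trajectories onto the possibly singular zero set $F_\nu$, and offering a stratification-based alternative---whereas the paper simply notes that $0$ is an isolated critical value and calls the rest a standard argument.
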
  

\begin{proof}
The function 
$$\rho:\tscrN \to \bbR:(x,B) \mapsto d(x,\nu)^2$$
that measures distance from $\nu$ is $K(\nu)$-invariant and proper, and has $0$ as an isolated critical value.  Thus, its gradient vector field integrates to a $K(\nu)$-equivariant deformation retraction from $C_\nu(\epsilon) = \rho^{-1}[0,\epsilon^2]$ to $F_\nu = \rho^{-1}(0)$, for $\epsilon$ sufficiently small.  
\end{proof}

\section{The key lemma and its consequences}

This section is devoted to the proof of the following lemma:

\begin{lemma}
\label{keylemma}
For some $\epsilon_0>0$ and $\delta_0 = \delta_0(\epsilon)>0$, the map $D_\nu(\epsilon,\delta) \to B(\delta,T)$ is a trivial fiber bundle whenever $\epsilon < \epsilon_0$ and $\delta < \delta_0(\epsilon)$.  Moreover the trivialization $D_\nu(\epsilon,\delta) \cong C_\nu(\epsilon) \times B(\delta,T)$ can be chosen $K(\nu)$-equivariant.
\end{lemma}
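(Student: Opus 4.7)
The plan is to apply Ehresmann's fibration theorem for manifolds with boundary to the restriction $\tilde{f}|_{D_\nu}\colon D_\nu \to B(\delta,T)$, and then to produce the $K(\nu)$-equivariant version by averaging trivializing vector fields over the compact group $K(\nu)$. Since $\tilde{f}$ is $G$-invariant by construction, $K(\nu)$ acts trivially on the base $T$, which makes the averaging compatible with the submersion. Global triviality follows from local triviality because $B(\delta,T)$, being a small geodesic ball in the Hermitian manifold $T$, is contractible. The fiber over $1$ is automatically $\tilde{f}^{-1}(1) \cap D_\nu = \{(x,B) \in \tilde{G} \mid x \in [B,B],\ d(x,\nu) \leq \epsilon\} = C_\nu(\epsilon)$, so the trivialization will have precisely the form claimed.

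To invoke Ehresmann I need three conditions: properness, submersivity in the interior, and submersivity on the boundary. Properness follows because the Grothendieck alteration $q\colon \tilde{G} \to G$ is proper (its fibers sit inside the flag variety), so $q^{-1}(\overline{B_\epsilon(\nu)})$ is compact, and the preimage under $\tilde{f}|_{D_\nu}$ of any compact $K \subset B(\delta,T)$ is a closed subset of this compact set. Submersivity in the interior is already in hand from Section~\ref{sec-trivial}: $\tilde{f}\colon \tilde{G} \to T$ is a submersion everywhere, in fact a trivial fibration.

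The heart of the argument is submersivity on $\partial D_\nu = \{(x,B) \mid d(x,\nu) = \epsilon,\ d(\bs_B(x),1) < \delta\}$. Writing $\rho(x,B) = d(x,\nu)^2$, a standard transversality analysis shows that $\tilde{f}|_{\partial D_\nu}$ is a submersion at $p$ precisely when $\rho$ is not critical along the fiber $\tilde{f}^{-1}(\tilde{f}(p))$ at $p$. Over $t = 1$ the fiber is $\tscrN$, and the condition becomes that $\epsilon^2$ is a regular value of $\rho|_{\tscrN}$. By the proof of Proposition~\ref{CnuFnu}, $0$ is an isolated critical value of $\rho|_{\tscrN}$, so this holds for all $\epsilon < \epsilon_0$ for some $\epsilon_0 > 0$; this pins down $\epsilon_0$. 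For $t$ near $1$, the locus $\partial C_\nu(\epsilon) = \{(u,B) \in \tscrN \mid d(u,\nu) = \epsilon\}$ is compact, the fiber-transverse component of $d\rho$ is a continuous nonvanishing section of a vector bundle along it, and nonvanishing is an open condition; hence for $\delta < \delta_0(\epsilon)$ sufficiently small, the non-critical condition persists uniformly for all $t \in B(\delta,T)$.

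The main obstacle is this last step: upgrading the base case $t=1$ furnished by Proposition~\ref{CnuFnu} to uniform transversality on a neighborhood of $1$ in $T$, and extracting a quantitative $\delta_0(\epsilon)$. Once that is established, the $K(\nu)$-equivariant trivialization is built in the standard manner: choose trivializing coordinate vector fields on $B(\delta,T)$, lift them to $D_\nu$ using the submersion, average the lifts over $K(\nu)$ (using that $K(\nu)$ is compact and acts trivially on $B(\delta,T)$), and integrate the resulting flows starting from $C_\nu(\epsilon) = \tilde{f}^{-1}(1) \cap D_\nu$.
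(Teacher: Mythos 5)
Your core argument is correct and follows the paper's proof in its essential step: both identify the boundary submersivity of $\tilde{f}|_{\partial D_\nu}$ as the crux, both reduce it to the statement that $\rho(x,B) = d(x,\nu)^2$ has no critical points on $\tscrN$ in $(0,\epsilon_0)$, and both derive the existence of $\delta_0(\epsilon)$ from the compactness/properness of the situation together with the fact that being non-critical is an open (equivalently, being critical is a closed) condition. The transversality reformulation you give --- $\tilde{f}|_{\partial D_\nu}$ is a submersion at $p$ iff $d\rho$ does not vanish on $\ker d\tilde{f}_p$, since $T_p\partial D_\nu = \ker d\rho_p$ is a hyperplane --- is exactly the paper's $T_1 \not\subset T_2$ argument.

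Where you genuinely diverge is in the tool used for the $K(\nu)$-equivariance. The paper deduces equivariance from Thom's second isotopy lemma applied to the action map $v\colon K(\nu)\times D_\nu \to D_\nu$ (viewed as a trivial $K(\nu)$-bundle over $M = D_\nu$), which produces a trivialization of $N$ over $B(\delta,T)$ compatible with $v$ and hence an equivariant trivialization of $D_\nu$. You instead lift the trivializing vector fields from $B(\delta,T)$, average the lifts over the compact group $K(\nu)$ (using that $K(\nu)$ acts trivially on the base and preserves $\partial D_\nu$), and integrate the flows. The averaging route is more elementary and self-contained --- it does not depend on stating or proving the second isotopy lemma --- while the paper's route is shorter given that both isotopy lemmas are already on the table. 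Both are valid; you should make sure the lifted fields can be chosen tangent to $\partial D_\nu$ (say via a collar), so that the average remains tangent and the flows preserve $D_\nu$. You are also somewhat more careful than the paper in explicitly verifying properness of $\tilde{f}|_{D_\nu}$ via the compactness of $q^{-1}(\overline{B_\epsilon(\nu)})$; the paper leaves that implicit.
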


We will use the following very special cases of Thom's isotopy lemmas:

\begin{theorem}[Thom's first isotopy lemma]
Let $M$ be a manifold with boundary, let $B \subset \bbR^n$ be an open ball, and let $u:M \to B$ be a smooth proper surjective mapping.  Suppose that $u$ is submersive on both the interior and the boundary of $M$.  Then $u$ is a topologically trivial fiber bundle.
\end{theorem}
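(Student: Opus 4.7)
The plan is to prove this special case of Thom's first isotopy lemma by constructing an explicit smooth trivialization through integration of vector fields. Since $B \subset \bbR^n$ is an open ball, it has a distinguished center $b_0$, and every $b \in B$ is connected to $b_0$ by a straight-line segment that lies entirely in $B$. A global trivialization will therefore be produced by flowing from the fiber $u^{-1}(b_0)$ along such paths. This reduces the problem to lifting the standard coordinate vector fields $\partial_1,\ldots,\partial_n$ from $B$ to vector fields $\xi_1,\ldots,\xi_n$ on $M$ whose flows furnish the trivialization.

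First I would construct the lifts $\xi_i$ satisfying $du(\xi_i) = \partial_i \circ u$ and, crucially, tangent to $\partial M$ along $\partial M$. Local lifts exist wherever $u$ is a submersion, so submersivity on the interior handles the interior, and submersivity on $\partial M$ lets one lift first as vector fields on the submanifold $\partial M$ (where $u|_{\partial M}\colon \partial M \to B$ is itself a submersion onto its image), then extend into a collar neighborhood of $\partial M$ inside $M$. These boundary and interior local lifts are glued together via a smooth partition of unity subordinate to a cover of $M$ that is locally finite over compact subsets of $B$; such a cover exists because $u$ is proper. Completeness of $\xi_i$ also follows from properness: an integral curve $\gamma(t)$ of $\xi_i$ satisfies $u(\gamma(t)) = u(\gamma(0)) + t\, e_i$, so as long as this projection remains in a compact subset of $B$, the curve $\gamma$ remains in a compact subset of $M$ and so extends. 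Assembling the time-$t$ flows $\phi^{\xi_i}_t$, I would define
$$\Phi\colon u^{-1}(b_0) \times B \longrightarrow M, \qquad \Phi(x,b) := \phi^{\xi_n}_{t_n} \circ \cdots \circ \phi^{\xi_1}_{t_1}(x),$$
where $b = b_0 + t_1 e_1 + \cdots + t_n e_n$. Convexity of $B$ makes the nested composition well-defined, the lifting property gives $u \circ \Phi(x,b) = b$, and running the flows backwards supplies a smooth two-sided inverse over $B$. Because each $\xi_i$ is tangent to $\partial M$, the flows preserve the boundary, and $\Phi$ becomes a diffeomorphism of manifolds with boundary covering the identity on $B$.

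The main technical obstacle is the construction of the lifts $\xi_i$: they must cover $\partial_i$ globally on $M$ \emph{and} remain tangent to $\partial M$ along $\partial M$. This is precisely where the hypothesis of submersivity on the boundary (rather than just on the interior) is used --- without it, the flows would generically exit $\partial M$ and the proposed trivialization would fail to respect the boundary. Everything else --- the partition-of-unity gluing, completeness via properness, and assembly via convexity of $B$ --- is standard once the correct lifts are in hand.
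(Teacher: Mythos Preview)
The paper does not give its own proof of this statement: it cites Mather's notes \cite{isotopy} for the full isotopy lemmas and remarks only that ``the special cases we consider here may be proved by much more elementary means.'' Your argument is precisely such an elementary proof --- the standard Ehresmann-type construction adapted to manifolds with boundary --- and it is correct.

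One small quibble: you invoke ``convexity of $B$'' to guarantee that the nested composition of flows is defined, but convexity alone is not quite the right property. What you actually need is that the piecewise-linear ``staircase'' path
\[
b_0 \;\to\; b_0 + t_1 e_1 \;\to\; \cdots \;\to\; b_0 + t_1 e_1 + \cdots + t_n e_n = b
\]
stays inside $B$; this holds because $B$ is a ball \emph{centered at} $b_0$, so each partial sum has distance at most $|b - b_0|$ from $b_0$. For a general convex body, or even for a ball with a different basepoint, such a staircase could exit. A cleaner way to sidestep the issue is to first identify $B$ diffeomorphically with all of $\bbR^n$, after which the coordinate flows are globally defined and no containment check is needed. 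This is cosmetic and does not affect the validity of your argument.
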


\begin{theorem}[Thom's second isotopy lemma]
Let $M$ and $N$ be manifolds with boundary, and let $v:N \to M$ be a proper map which exhibits $N$ as a locally trivial fiber bundle over $M$.  Let $B \subset \bbR^n$ be an open ball, and let $u:M \to B$ be a smooth proper surjective mapping.  Suppose that $u$ is submersive on both the interior and the boundary of $M$, so that per the first isotopy lemma $u$ admits a trivialization over $B$.  Then the map $v:N \to M$ admits a trivialization over $B$: for a given point $b \in B$, and letting $w$ denote the natural map $(u \circ v)^{-1}(b) \to u^{-1}(b)$, there are homeomorphisms making the following diagram commute
$$\xymatrix
{
N \ar[r]^{\cong \quad \quad \quad} \ar[d]_v & B \times (u \circ v)^{-1}(b) \ar[d]^{1_B \times w} \\
M \ar[r]_{\cong \quad} & B \times u^{-1}(b)
}
$$
Furthermore, all maps in this square commute with the projections to $B$.
\end{theorem}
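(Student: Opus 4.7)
The strategy is to verify the hypotheses of Thom's first isotopy lemma for the map $u = \tilde{f}|_{D_\nu}: D_\nu(\epsilon,\delta) \to B(\delta,T)$, using the global trivialization of $\tilde f$ from Section~\ref{sec-trivial} to analyze the boundary, and then to average over the compact group $K(\nu)$ to promote the resulting trivialization to an equivariant one.

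The diffeomorphism $\tilde G \cong \tscrN \times T$ from Section~\ref{sec-trivial}, sending $(y,B) \mapsto ((\bu_B(y),B), \bs_B(y))$, identifies $\tilde f$ with the projection to $T$. Under this diffeomorphism, $D_\nu$ corresponds to
$$\widetilde D_\nu = \{((x,B),t) \in \tscrN \times B(\delta,T) \mid \tilde\rho((x,B),t) \le \epsilon^2\},$$
where $\tilde\rho((x,B),t) := d(x \cdot s(t,B),\nu)^2$ and $s(t,B) \in [B,B]^\perp$ is the canonical lift of $t$ coming from the trivialization. The slice $\tilde\rho(\cdot,1)$ recovers the function $\rho$ of Proposition~\ref{CnuFnu}, so the fiber of $u$ over $1 \in T$ is visibly $C_\nu(\epsilon)$, as desired.

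I then verify the hypotheses of the first isotopy lemma. Properness of $u$ is immediate because $u^{-1}(K)$ (for $K \subset B(\delta,T)$ compact) is a closed subset of the compact set $\{d(y,\nu) \le \epsilon\}$, and surjectivity of $u$ holds once $\delta < \delta_0(\epsilon)$ is small enough, since the open map $\tilde f$ sends $(\nu,B)$ to $1$ for every Borel $B$ containing $\nu$. Submersivity on the interior is immediate from submersivity of $\tilde f$. The key step is submersivity on the boundary $\partial \widetilde D_\nu = \tilde\rho^{-1}(\epsilon^2) \cap (\tscrN \times B(\delta,T))$: since $\epsilon^2$ is a regular value of $\rho$ for $\epsilon < \epsilon_0$ (Proposition~\ref{CnuFnu}), the $\tscrN$-component of $d\tilde\rho$ is nonzero on the compact set $\partial C_\nu(\epsilon) \times \{1\}$. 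By continuity of $d\tilde\rho$ and the compactness estimate $\tilde\rho \le \epsilon^2 \Rightarrow \rho \le 2\epsilon^2$ for $t$ near $1$, a standard perturbation-and-compactness argument shows that after shrinking $\delta_0 = \delta_0(\epsilon)$ the $\tscrN$-component of $d\tilde\rho$ remains nonvanishing on all of $\partial \widetilde D_\nu$. This is equivalent to the projection $\partial \widetilde D_\nu \to B(\delta,T)$ being a submersion, completing the hypotheses of Thom's first isotopy lemma and yielding a topological trivialization $D_\nu \cong C_\nu(\epsilon) \times B(\delta,T)$ over $u$.

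For $K(\nu)$-equivariance, observe that $K(\nu)$ acts on $\tilde G$ by conjugation, preserves $D_\nu$ (both defining conditions are $K(\nu)$-invariant), and acts trivially on the universal torus $T$ (because $T$ is defined as $B/[B,B]$ for every Borel simultaneously), so $u$ is $K(\nu)$-equivariant. The standard proof of Thom's first isotopy lemma constructs the trivialization by integrating horizontal lifts of a framing of $B(\delta,T)$; fixing a $K(\nu)$-invariant Riemannian metric on $D_\nu$ (obtained by averaging any Riemannian metric over the compact group $K(\nu)$) renders these horizontal lifts $K(\nu)$-equivariant, and the resulting flows yield a $K(\nu)$-equivariant trivialization. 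The main technical obstacle is the boundary submersivity, where the precise dependence $\delta_0 = \delta_0(\epsilon)$ must be extracted by the perturbation argument above; all remaining steps are either formal or standard compactness/averaging manipulations.
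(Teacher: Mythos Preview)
Your proposal does not address the stated theorem at all. Thom's second isotopy lemma is a general statement about arbitrary manifolds-with-boundary $M,N$, an arbitrary fiber bundle $v:N\to M$, and an arbitrary submersion $u:M\to B$; the paper does not prove it but simply quotes it from Mather's notes. What you have written instead is a proof of Lemma~\ref{keylemma} (the ``key lemma'' that $D_\nu(\epsilon,\delta)\to B(\delta,T)$ is a $K(\nu)$-equivariantly trivial bundle). Nothing in your argument---the trivialization $\tilde G\cong\tscrN\times T$, the function $\tilde\rho$, the spaces $D_\nu$, $C_\nu$, the group $K(\nu)$---makes sense for the general $M,N,u,v$ of the statement you were asked to prove.

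If one sets that mismatch aside and reads your text as an attempted proof of Lemma~\ref{keylemma}, it is essentially correct and close to the paper's argument on the non-equivariant part: both reduce to checking that $\tilde f$ restricted to $\partial D_\nu$ is a submersion, and both deduce this from the regularity of $\epsilon^2$ for $\rho\vert_{\tscrN}$ together with a compactness/properness argument to propagate from the central fiber to nearby fibers. The one substantive difference is in how $K(\nu)$-equivariance is obtained. The paper \emph{applies} Thom's second isotopy lemma to the action map $v:K(\nu)\times D_\nu\to D_\nu$ (with $M=D_\nu$ and $u=\tilde f\vert_{D_\nu}$), so that trivializing $v$ over $B(\delta,T)$ forces the trivialization of $M$ to intertwine the $K(\nu)$-actions. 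You instead average a Riemannian metric over the compact group $K(\nu)$ and integrate the resulting invariant horizontal lifts. Your route is more elementary and self-contained (it avoids invoking the second isotopy lemma altogether), while the paper's route is slicker once that lemma is available; both are valid for proving Lemma~\ref{keylemma}, but neither constitutes a proof of the theorem actually stated.
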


Proofs of the full isotopy lemmas may be found in \cite{isotopy}, however we remark that the special cases we consider here may be proved by much more elementary means.

\begin{proof}[Proof of lemma \ref{keylemma}]
Let $D_\nu^\circ(\epsilon,\delta) \subset D_\nu(\epsilon,\delta)$ be the open set of those $(x,B) \in D_\nu(\epsilon,\delta)$ with $d(x,\nu) < \epsilon$, and let
$\partial D_\nu(\epsilon,\delta) \subset D_\nu(\epsilon,\delta)$ denote the closed complement of those $(x,B)$ with $d(x,\nu) = \epsilon$.  Define $C_\nu^\circ(\epsilon) \subset C_\nu(\epsilon)$ and $\partial C_\nu(\epsilon)$ similarly.  Let $\rho$ denote the function $\tilde{G} \to \bbR$ defined by
$$\rho(x,B) = d(x,\nu)^2$$
Since $\rho$ and $\rho\vert_{\tscrN}$ are real algebraic and smooth, their critical values are isolated by Sard's theorem.  It follows that there exists a positive real number $\epsilon_0$ such that neither $\rho$ nor $\rho\vert_{\tscrN}$ have critical values in the open interval $(0,\epsilon_0)$.  In particular, $\partial D_\nu(\epsilon,\delta)$ (and $\partial C_\nu(\epsilon)$) is a manifold for every $\epsilon$ in $(0,\epsilon_0)$.

By the first isotopy lemma, the fact that $D_\nu(\epsilon,\delta) \to B(\delta,T)$ is a trivial fiber bundle is a consequence the following claim: there exists $\delta_0$ such that the projections $D_\nu^\circ(\epsilon,\delta) \to B_{\delta}(T)$ and $\partial D_\nu(\epsilon,\delta) \to B_{\delta}(T)$ are submersions for all $\epsilon < \epsilon_0$ and $\delta < \delta_0$.  Furthermore, applying Thom's second isotopy lemma to the case where $M = D_\nu(\epsilon,\delta)$,  $N = K(\nu) \times D_\nu(\epsilon,\delta)$, and $v:N \to M$ is the action map gives a $K(\nu)$-equivariant trivialization.  Thus, let us prove the claim.

Since $q:\tilde{G} \to T$ is a submersion (by section \ref{sec-trivial}) and $D^\circ_\nu(\epsilon,\delta)$ is open in $\tilde{G}$, the map $D_\nu^\circ(\epsilon,\delta) \to B(\delta,T)$ is also a submersion for any positive $\epsilon$ and $\delta$.  
We are left with showing that $q\vert_{\partial D_\nu(\epsilon,\delta)}$ is a submersion for $\epsilon < \epsilon_0$ and $\delta$ sufficiently small.  
To show that $q\vert_{\partial D_\nu(\epsilon,\delta)}$ does not have critical values in a sufficiently small neighborhood of $1 \in T$, it suffices to show (since $q\vert_{\partial D_\nu(\epsilon,\delta)}$ is proper and the set of critical values is closed) that $0$ itself is not critical -- i.e. that at any point $(x,B) \in \tscrN = q^{-1}(0)$, the vertical tangent space to $q$ at $(x,B)$ and the tangent space to $\partial D_\nu(\epsilon,\delta)$ at $(x,B)$ are transverse.  

Let us call these tangent spaces $T_1$ and $T_2$, respectively.  
Since $\partial D_\nu(\epsilon,\delta)$ is of real codimension one in $\tilde{G}$, it is equivalent to showing that $T_2$ does not contain $T_1$.  But  $T_1$ is the tangent space to $\tscrN$ and $T_2$ is the kernel of $d\rho_{x,B}$, so if $T_1 \subset T_2$ then $(x,B)$ is a critical point of $\rho\vert_{\tscrN}$, which contradicts the assumption that $\epsilon < \epsilon_0$.  This completes the proof.

\end{proof}

We immediately have the following corollaries:

\begin{corollary}
\label{cor:milnor}
If $\epsilon_0$ and $\delta_0$ are as in lemma \ref{keylemma}, then for $\epsilon < \epsilon_0$ and $\delta < \delta_0(\epsilon)$, the map $E_\nu(\epsilon,\delta)^\reg \to B(\delta,W \git T)^\reg$ is a $K(\nu)$-equivariant locally trivial fiber bundle.
\end{corollary}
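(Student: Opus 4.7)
The plan is to deduce the corollary from Lemma \ref{keylemma} by restricting the trivialization there to the regular part of the base and descending along the $W$-cover $\pi : B(\delta,T)^\reg \to B(\delta, W\git T)^\reg$.

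The key technical point I would verify first is that $D_\nu^\reg$ coincides with the preimage of $B(\delta,T)^\reg$ under the map $D_\nu \to B(\delta,T)$; equivalently, for $(x,B) \in D_\nu$ one has $x \in G^\reg$ if and only if $\bs_B(x) \in T^\reg$. The ``only if'' direction is immediate since $\bs_B(x)$ lies in the conjugacy class of $x$. For the ``if'' direction, suppose $t = \bs_B(x)$ is regular. Then $Z_B(t) = Z_G(t) \cap B = T$, and a standard computation shows the $B$-conjugation orbit of $t$ is exactly $tU$: the map $u \mapsto (t^{-1}ut)u^{-1}$ on the unipotent radical $U$ of $B$ has differential $\mathrm{Ad}(t^{-1}) - 1$ at the identity, which is invertible on $\mathrm{Lie}(U)$ because $t$ lies on no root hyperplane. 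Hence $x = t \cdot \bu_B(x) \in tU$ is $B$-conjugate to $t$, and so $x$ is regular semisimple in $G$. I expect this identification to be the main technical point.

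Given this, restricting the $K(\nu)$-equivariant trivialization $D_\nu \cong C_\nu \times B(\delta,T)$ from Lemma \ref{keylemma} to the open subset $B(\delta,T)^\reg$ of the base yields a $K(\nu)$-equivariant trivialization $D_\nu^\reg \cong C_\nu \times B(\delta,T)^\reg$. To descend to $E_\nu^\reg$, I would use that the cartesian square at the end of Section \ref{sec:spacesnear} identifies $D_\nu^\reg \to E_\nu^\reg$ with the pullback of the principal $W$-bundle $\pi$. For any point of $B(\delta, W\git T)^\reg$, pick a neighborhood $U$ over which $\pi$ admits a section $s : U \to B(\delta,T)^\reg$, and set $U' = s(U)$; the cartesian property gives a canonical homeomorphism $E_\nu^\reg|_U \cong D_\nu^\reg|_{U'}$, and the latter is homeomorphic to $C_\nu \times U'$ by restriction of the trivialization above. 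Transporting along $U' \cong U$ produces a trivialization $E_\nu^\reg|_U \cong C_\nu \times U$. The whole construction is $K(\nu)$-equivariant because $K(\nu)$-conjugation acts trivially on the universal torus $T$, so it commutes with the deck action of $W$.
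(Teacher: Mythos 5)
Your proof is correct, and it is the natural argument that the paper treats as immediate (the paper gives no proof, just the phrase ``We immediately have the following corollaries''). The ``key technical point'' you isolate --- that $D_\nu^\reg$ is exactly the preimage of $B(\delta,T)^\reg$ under the bundle map, i.e.\ that $(x,B)$ is regular iff $\bs_B(x)$ is regular in $T$ --- is indeed what is needed to restrict the trivialization of Lemma \ref{keylemma} to the regular locus before descending along the $W$-cover, and your verification of it via $B$-conjugation and invertibility of $\mathrm{Ad}(t^{-1})-1$ on $\mathrm{Lie}(U)$ is correct.
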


In particular, the Borel construction of $E_\nu(\epsilon,\delta)$ with respect to any subgroup $K' \subset K(\nu)$ may be exhibited as a fiber bundle over $B(\delta,W \git T)^\reg$, whose fibers are Borel constructions of the $K'$-space $C_\nu(\epsilon)$.  Indeed, letting $EK'$ denote a contractible space on which $K'$ acts freely, the map $K' \backslash (E_\nu(\epsilon,\delta) \times EK') \to B(\delta,W\git T)^\reg$ that takes the $K'$-orbit of $(x,y)$ to $f(x)$ is well-defined and provides such a fibration.  We will use this fact in the next section.

\begin{corollary}
\label{cor:slodowy}
Let $A$ be a commutative ring.  With $\epsilon$ and $\delta$ as above, let $\phi$ denote the map $E_\nu^\reg \to B(\delta,W \git T)^\reg$.  The higher direct image sheaves $R^i \phi_* A$ are locally constant on $B(\delta,W\git T)^\reg$.  Moreover, the stalks are isomorphic to the cohomology sheaves $H^i(F_\nu;A)$ with coefficients in $A$, and the monodromy action $\pi_1(B(\delta,W \git T)^\reg) \to \mathrm{Aut}(H^i(F_\nu;A))$ factors through a quotient $\pi_1(B(\delta,W \git T)^\reg) \to W$. 
\end{corollary}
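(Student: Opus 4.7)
The plan is to deduce the corollary from lemma \ref{keylemma} by descending along the $W$-cover $\pi: B(\delta,T)^\reg \to B(\delta, W\git T)^\reg$. Write $\tilde{\phi}: D_\nu^\reg \to B(\delta,T)^\reg$ for the corresponding restriction on regular parts; by the cartesian square at the end of section \ref{sec:spacesnear}, $\tilde{\phi}$ is the pullback of $\phi$ along $\pi$, and hence $D_\nu^\reg \to E_\nu^\reg$ is itself a $W$-cover. One small bookkeeping identification is needed — that $D_\nu^\reg$ coincides with the preimage of $B(\delta,T)^\reg$ inside $D_\nu$ — which follows because an element of $\tilde{G}$ is regular semisimple exactly when its image under $\tilde{f}$ is.

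First I would trivialize $\tilde{\phi}$. Restricting the bundle isomorphism $D_\nu \cong C_\nu \times B(\delta,T)$ supplied by lemma \ref{keylemma} to the regular part of the base produces a trivialization $D_\nu^\reg \cong C_\nu \times B(\delta,T)^\reg$. In particular $R^i\tilde{\phi}_* A$ is the constant sheaf on $B(\delta,T)^\reg$ with stalk $H^i(C_\nu;A)$, and by proposition \ref{CnuFnu} this equals $H^i(F_\nu;A)$.

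Next I would descend. Local constancy of $R^i\phi_* A$ is already delivered by corollary \ref{cor:milnor}, which says $\phi$ itself is a locally trivial fiber bundle. To pin down the stalks, note that for any point $W\cdot t$ in the base, a choice of lift $t \in \pi^{-1}(W\cdot t)$ gives a homeomorphism $\tilde{\phi}^{-1}(t) \to \phi^{-1}(W\cdot t)$; the left-hand side is $C_\nu$, so the stalks of $R^i\phi_* A$ are $H^i(F_\nu;A)$. Globally, the same identification says $\pi^* R^i\phi_* A$ is constant on $B(\delta,T)^\reg$, which forces the monodromy of $R^i\phi_* A$ to be trivial on the image of $\pi_1(B(\delta,T)^\reg) \to \pi_1(B(\delta, W\git T)^\reg)$; since $\pi$ is Galois with deck group $W$, this quotient is exactly $W$, and the desired factorization follows. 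I do not anticipate any real obstacle here: the geometric content has already been packaged in the key lemma, and what remains is formal manipulation of covering spaces and locally constant sheaves.
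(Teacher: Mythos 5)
Your argument is correct and fills in, in the natural way, the proof that the paper leaves implicit (it presents this corollary as ``immediate'' from Lemma \ref{keylemma}). Trivializing $D_\nu^\reg \to B(\delta,T)^\reg$ via the key lemma and descending along the Galois $W$-cover $\pi$ is exactly the intended route, and your bookkeeping point that $D_\nu^\reg$ is the preimage of $B(\delta,T)^\reg$ (so the trivialization restricts correctly) is the right thing to check.
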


It is shown in \cite{hotta} that when $A = \bbQ_\ell$, this action of $W$ on $H^i(F_\nu)$ coincides with the original action of Springer after tensoring with the sign representation of $W$.

\subsection{Weyl group action on Springer fibers at the level of homotopy}
Let us indicate how this result lifts Springer's action of $W$ on the cohomology of $F_\nu$ to an action of $W$ on the homotopy type of $F_\nu$.  Let $\scrU$ be the universal cover of $B_\delta(W \git T)^\reg$ (equivalently: of $B_\delta(T)^\reg$); it is well-known that $\scrU$ is contractible and that the deck group of $\scrU$ is $B_W$, the braid group associated to $W$ (this was finally proved for all reflection groups in \cite{deligne}).  It follows that $B_W$ acts on the pullback $E_\nu^\reg \times_{B_\delta(W\git T)^\reg}\scrU$ of $E_\nu^\reg$ to $\scrU$.
In fact, it acts freely.  The lemma provides a homeomorphism between this pullback and $C_\nu \times \scrU$, and by proposition \ref{CnuFnu}, this space is homotopy equivalent to the Springer fiber $F_\nu$.  To complete the construction, we need to check that the subgroup $B_W'$ of ``pure braids'' (i.e. the kernel of the projection $B_W \to W$) acts on $\scrU \times C_\nu$ by transformations homotopic to the identity.  This also follows from the lemma: $C_\nu \times \scrU/{B_W'}$ is trivial over $\scrU/{B_W'} \cong B_\delta(T)^\reg$.

\section{Induction theorems}
In this section we will prove theorems \ref{thm:alvislusztig} and \ref{thm:ALmodular}.  To do this we will need to consider the spaces $F,E,D,C$ defined in section \ref{sec:spacesnear} for different reductive groups $G$.  To keep them straight we need to complicate the notation introduced in sections \ref{sec:glossary} and \ref{sec:spacesnear} with subscripts.

\begin{enumerate}
\item[\bul] Write $T_G, W_G, \scrN_G$, and $\tscrN_G$ for the universal torus, unipotent cone, and Springer resolution associated to the complex connected reductive algebraic group $G$.
\item[\bul] Write $F_{\nu,G}, E_{\nu,G}(\epsilon,\delta), D_{\nu,G}(\epsilon, \delta)$, and $C_{\nu,G}(\epsilon)$ for the spaces associated to $G$ defined in section \ref{sec:spacesnear}.
\item[\bul] If $G$ is endowed with a hermitian metric, write $K_G$ for the maximal compact which preserves it.  Write $K_G(\nu)$ for the intersection of $K_G$ with the centralizer of the unipotent element $\nu$.
\end{enumerate}

\begin{proof}[Proof of theorem \ref{thm:alvislusztig}]
Since any maximal torus of $L$ is also a maximal torus of $G$, we may identify the universal tori $T_L$ and $T_G$, so we will omit the subscript.  For appropriate $\epsilon$ and $\delta$, $\delta \ll \epsilon \ll 1$, we have by corollary \ref{cor:milnor} a commutative square
$$
\xymatrix{
E_{\nu,L}(\epsilon,\delta)^\reg \ar[r] \ar[d] & E_{\nu,G}(\epsilon,\delta)^\reg \ar[d] \\
B(\delta,W_L\git T)^\reg \ar[r] & B(\delta,W_G\git T)^\reg
}
$$
(Let us fix $\epsilon$ and $\delta$, and for the rest of the proof omit them from the notation for $E_{\nu,L}, E_{\nu,G}$.)
Let $U(1) \subset G$ be a compact circle whose centralizer in $G$ is $L$.  Then $U(1) \subset K_G(\nu)$ and the $U(1)$-fixed points $E_{\nu,G}^{U(1)}$ coincide with $E_{\nu,L}$.  We will now deduce the theorem by standard ``localization'' techniques in $U(1)$-equivariant cohomology, and the degeneration of the Borel spectral sequence.

We may replace the top row of the diagram above by the Borel constructions with respect to $U(1)$.  That is, letting $X_{hU(1)}$ denote the Borel construction of a $U(1)$-space $X$, we have
$$\xymatrix{
(E_{\nu,L}^\reg)_{hU(1)} \ar[r] \ar[d] & (E_{\nu,G}^\reg)_{hU(1)} \ar[d] \\
B(\delta,W_L \git T)^\reg \ar[r] & B(\delta,W_G \git T)^\reg
}$$
Let us denote the vertical maps by $\chi_L$ and $\chi_G$, and the bottom horizontal map by $\iota$.  Define a locally constant sheaf $\scrF_L$ on $B(\delta, W_L \git T)^\reg$ and $\scrF_G$ on $B(\delta, W_G \git T)^\reg$ by
$$\begin{array}{c}
\scrF_L = \bigoplus_{i \in \bbZ} R^i \chi_{L*} \bbQ \\
\scrF_G = \bigoplus_{i \in \bbZ} R^i \chi_{G*} \bbQ
\end{array}
$$
The fibers of $\scrF_L$ (resp. $\scrF_G$) are isomorphic to the equivariant cohomology groups $H^*_{U(1)}(F_{\nu,L};\bbQ)$ (resp. $H^*_{U(1)}(F_{\nu,G};\bbQ)$).
Fix an isomorphism of graded rings $H^*_{U(1)}(\mathit{pt};\bbQ) \cong \bbQ[t]$, where $t$ has degree $2$; then $\scrF_L$ and $\scrF_G$ are sheaves of $\bbQ[t]$-modules.  Since $E_{\nu,L}^\reg$ is the $U(1)$-fixed set in $E_{\nu,G}^\reg$, the localization theorem for $U(1)$-equivariant cohomology (see e.g. \cite[Theorem 4.4]{quillen}) provides an isomorphism of graded $\bbQ[t,t^{-1}]$-modules
$$\scrF_G \otimes_{\bbQ[t]} \bbQ[t,t^{-1}] \cong \iota_* \scrF_L \otimes_{\bbQ[t]} \bbQ[t,t^{-1}]$$
Translating this into the language of $W_G$-modules, we have
$$H^*_{U(1)}(F_{\nu,G},\bbQ) \otimes_{\bbQ[t]} \bbQ[t,t^{-1}] \cong \text{Ind}_{W_L}^{W_G} H^*_{U(1)}(F_{\nu,L};\bbQ) \otimes_{\bbQ[t]} \bbQ[t,t^{-1}]$$
Since $U(1)$ acts trivially on $F_{\nu,L}$, the group on the right is isomorphic to $\text{Ind}_{W_L}^{W_G} H^*(F_{\nu,L};\bbQ) \otimes_\bbQ \bbQ[t,t^{-1}]$.

Consider the Leray spectral sequence for the composition of maps
$$(E_{\nu,G}^\reg)_{hU(1)} \to BU(1) \times B(\delta,W_G\git T)^\reg \to B(\delta, W_G \git T)^\reg$$
This is a spectral sequence in the category of locally constant sheaves on $B(\delta, W_G \git T)^\reg$, whose stalk at a point is the Borel spectral sequence
$$E_2^{ij} = H^i_{U(1)}(\mathit{pt}) \otimes H^j(F_{\nu,G};\bbQ) \implies H^{i+j}_{U(1)}(F_{\nu,G};\bbQ)$$
In particular, we may regard the Borel spectral sequence as a spectral sequence of $W_G$-modules.  Furthermore, since $H^*(F_{\nu,G};\bbQ)$ is known to vanish in odd degrees (in fact this is true for any coefficient ring, \cite{dpl}), this spectral sequence degenerates.  It follows that the localized spectral sequence
$$E_2^{ij}[t^{-1}]:= \bbQ[t,t^{-1}] \otimes H^j(F_{\nu,G};\bbQ) \implies H^*_{U(1)}(F_{\nu,G};\bbQ) \otimes_{\bbQ[t]} \bbQ[t,t^{-1}]$$
also degenerates.  In other words, $H^*_{U(1)}(F_{\nu,G};\bbQ) \otimes_{\bbQ[t]} \bbQ[t,t^{-1}] \cong \text{Ind}_{W_L}^{W_G} H^*(F_{\nu,L};\bbQ) \otimes_\bbQ \bbQ[t,t^{-1}]$ admits a filtration whose associated graded is $E_2^{ij}[t^{-1}]$.  The conclusion of the theorem follows from restricting to the $i+j = 0$ line of $E_2^{ij}[t^{-1}]$.
\end{proof}

\begin{proof}[Proof of theorem \ref{thm:ALmodular}]
Let $\bbZ/p \subset G$ be the subgroup generated by $g$.  As in the proof of theorem \ref{thm:alvislusztig}, we have $\bbZ/p \subset K_G(\nu)$, and $E_\nu^\reg$ is the set of $\bbZ/p$-fixed points of $E_{\nu,G}^\reg$.  Thus as before, this time by applying the localization theorem for $\bbZ/p$-equivariant cohomology (theorem 4.2 of \cite{quillen}), we have an isomorphism of $W_G$-modules
$$H^*_{\bbZ/p}(F_{\nu,G},\bbF_p) \otimes_{\hzp} \hzp[t^{-1}] \cong \text{Ind}_{W_Z}^{W_G} H^*_{\bbZ/p}(F_{\nu,Z};\bbF_p) \otimes_{\hzp} \hzp[t^{-1}]$$
Here $t$ denotes a polynomial generator in degree $2$ if $p$ is odd, and degree $1$ if $p = 2$.  Once again since $\bbZ/p$ acts trivially on $F_{\nu,Z}$, we have $\text{Ind}_{W_Z}^{W_G} H^*_{\bbZ/p}(F_{\nu,Z};\bbF_p) \otimes_{\hzp} \hzp[t^{-1}] \cong \text{Ind}_{W_Z}^{W_G} H^*(F_{\nu,Z};\bbF_p) \otimes_{\bbF_p} \hzp[t^{-1}]$, and we may complete the proof by showing that the Borel spectral sequence
$$E_2^{ij} = \hzp \otimes H^j(F_{\nu,G};\bbF_p) \implies H^{i+j}_{\bbZ/p}(F_{\nu,G};\bbF_p)$$
degenerates.

We may do this as follows.  Since $g$ is in the connected component of the identity of the centralizer of $\nu$, we may find a circle subgroup $U(1) \subset K_G(\nu)$ with $g \in U(1)$.  Then we have a cartesian square
$$
\xymatrix{
(F_{\nu,G})_{h\bbZ/p} \ar[r]^{a} \ar[d]_b & (F_{\nu,G})_{hU(1)}\ar[d]^c \\
B(\bbZ/p) \ar[r]_{d} & BU(1)
}
$$
The morphisms in this square are fiber bundles with fiber $F_{\nu,G}$.  Since both the cohomology groups $H^*(F_{\nu,G},\bbF_p)$ and $H^*(BU(1);\bbF_p)$ vanish in odd degrees, the derived pushforward sheaf $Rc_* \bbF_p$ is formal, i.e. isomorphic to a direct sum of its cohomology sheaves.  By the proper base change theorem, the same is true for $Rb_* \bbF_p$.  This implies that the Leray spectral sequence of the map $b$ and the sheaf $\bbF_p$ degenerates.  But this Leray spectral sequence is exactly the Borel spectral sequence we are considering.  This completes the proof.
\end{proof}

\begin{remark}
In case $g$ is not contained in the identity component of the centralizer of $\nu$, we may still conclude from the existence of the Borel spectral sequence that 
$$\mathrm{gr}^k(\text{\rm Ind}_{W_Z}^{W_G} (\bigoplus_{i \in \bbZ} H^i(F_{\nu,Z};\bbF_p)))$$ is a subquotient of $H^k(F_{\nu,G};\bbF_p)$, even if this spectral sequence does not degenerate.
\end{remark}

\begin{remark}
Let us give an example (without proof) where the hypotheses of theorem \ref{thm:ALmodular} apply, and one where they do not.  Let $G = Sp(4)$, $Z = SL(2) \times SL(2)$ (note that this is not a Levi subgroup) and $p = 2$.  Then $Z$ is the centralizer of
$$
g = \left(
\begin{array}{cccc}
1 & 0 & 0 & 0 \\
0 & 1 & 0 & 0 \\
0 & 0 & -1 & 0 \\
0 & 0 & 0 & -1
\end{array}
\right)
$$
We may take for $\nu$ the matrix
$$
\left(
\begin{array}{cccc}
1 & 1 & 0 & 0 \\
0 & 1 & 0 & 0 \\
0 & 0 & 1 & 0 \\
0 & 0 & 0 & 1
\end{array}
\right)
$$ 
but not the subregular unipotent
$$
\left(
\begin{array}{cccc}
1 & 1 & 0 & 0 \\
0 & 1 & 0 & 0 \\
0 & 0 & 1 & 1 \\
0 & 0 & 0 & 1
\end{array}
\right)
$$ 

\end{remark}

\emph{Acknowledgements:}  I am very grateful to Mark Goresky for lots of correspondence and conversations, especially about Thom's ``maps without blowups,'' and to Kevin McGerty for suggesting I think about induction theorems.  I have also benefited from discussions with Zhiwei Yun.

\end{document}